\documentclass[11pt]{amsart}
\usepackage{hyperref}
\usepackage[capitalize, nameinlink]{cleveref}
\crefname{theorem}{Theorem}{Theorems}
\crefname{proposition}{Proposition}{Propositions}
\crefname{observation}{Observation}{Observations}
\crefname{lemma}{Lemma}{Lemmas}
\crefname{claim}{Claim}{Claims}
\crefname{problem}{Problem}{Problems}
\crefname{conjecture}{Conjecture}{Conjectures}
\crefname{question}{Question}{Questions}
\crefname{example}{Example}{Examples}
\crefname{fact}{Fact}{Facts}


\usepackage{amssymb}
\usepackage[bbgreekl]{mathbbol}
\usepackage{graphicx}
\usepackage{ifthen}
\usepackage{pict2e}
\usepackage{xargs}
\usepackage{xspace}
\usepackage{xcolor}
\usepackage{pgf,tikz}
\usepackage{pgfplots}
\usepackage{environ}
\usepackage{caption}
\usetikzlibrary{arrows}
\usepackage{enumitem}
\usepackage{xifthen}
\usepackage{comment}
\sloppy

\newcounter{dummy}
\makeatletter
\newcommand\myitem[1][]{\item[#1]\refstepcounter{dummy}\def\@currentlabel{#1}}
\makeatother

\makeatletter
\newsavebox{\measure@tikzpicture}
\NewEnviron{scaletikzpicturetowidth}[1]{%
	\def\tikz@width{#1}%
	\begin{lrbox}{\measure@tikzpicture}%
		\BODY
	\end{lrbox}%
	\pgfmathparse{#1/\wd\measure@tikzpicture}%
	\BODY
}
\makeatother

\DeclareSymbolFontAlphabet{\mathbb}{AMSb}

\newcommand{\thistheoremname}{}
\newtheorem*{genericthm*}{\thistheoremname}
\newenvironment{namedthm*}[1]
{\renewcommand{\thistheoremname}{#1}%
	\begin{genericthm*}}
	{\end{genericthm*}}


\newcommand{\Bairespace}[1][]{
	\ifthenelse{\equal{#1}{}}{\functions{\N}{\N}}{\functions{#1}{\N}}
}
\newcommand{\bbL}{\mathbb{L}}
\newcommand{\bbX}{\mathbb{X}}

\newcommand{\Cantorspace}[1][]{
	\ifthenelse{\equal{#1}{}}{\functions{\N}{2}}{\functions{#1}{2}}
}

\newcommandx{\concatenation}[2][1 = undefined, 2 = undefined]{
	\ifthenelse{\equal{#1}{undefined}}{{}\smallfrown}{
		\ifthenelse{\equal{#2}{undefined}}{\bigoplus #1}{\bigoplus_{#1} #2}
	}
}

\newcommandx{\functions}[3][3 =]{
	\ifthenelse{\equal{#3}{}}{#2^{#1}}{#2_{#3}^{#1}}
}

\newcommand{\Gzero}[1][]{
	\ifthenelse{\equal{#1}{}}
	{\mathbb{G}_0}
	{\mathbb{G}_{0,n}}
}
\newcommandx{\Hzero}[2][2 = undefined]{
	\ifthenelse{\equal{#2}{undefined}}
	{\mathbb{H}_{#1}}
	{\mathbb{H}_{#1, #2}}
}
\newcommandx{\intersection}[2][1 =, 2 =]{
	\ifthenelse{\equal{#1}{}}{\cap}{
		\ifthenelse{\equal{#2}{}}{\bigcap #1}{{\bigcap_{#1} #2}}
	}
}
\newcommand{\Lzero}[1][]{\ifthenelse{\equal{#1}{}}{\bbL_0}{L_{0, #1}}}
\newcommand{\Lzerospace}[1][]{\ifthenelse{\equal{#1}{}}{\bbX_0}{X_{0, #1}}}

\newcommand{\modulo}[1]{\ (\text{mod } 2)}
\newcommand{\N}{\mathbb{N}}

\newcommandx{\product}[2][1 =, 2 =]{
	\ifthenelse{\equal{#1}{}}{\times}{
		\ifthenelse{\equal{#2}{}}{\prod #1}{{\prod_{#1} #2}}
	}
}

\newcommandx{\sequence}[2][2 = undefined]{
	\ifthenelse{\equal{#2}{undefined}}{(#1)}{
		(#1)_{#2}
	}
}

\newcommandx{\set}[2][2 = undefined]{
	\ifthenelse{\equal{#2}{undefined}}{\{ #1 \}}{
		\{ #1 \suchthat #2 \}
	}
}
\newcommandx{\sets}[3][3 =]{
	\ifthenelse{\equal{#3}{}}{[#2]^{#1}}{[#2]^{#1}_{#3}}
}

\newcommand{\suchthat}{\mid}

\renewcommand{\restriction}[2]{#1 \upharpoonright #2}

\newcommandx{\union}[2][1 =, 2 =]{
	\ifthenelse{\equal{#1}{}}{\cup}{
		\ifthenelse{\equal{#2}{}}{\bigcup #1}{{\bigcup_{#1} #2}}
	}
}




\newtheorem{theorem}{Theorem}[section]
\newtheorem{lemma}[theorem]{Lemma}

\newtheorem{claim}[theorem]{Claim}

\newtheorem{corollary}[theorem]{Corollary}
\newtheorem{proposition}[theorem]{Proposition}

\newtheorem{problem}[theorem]{Problem}

\theoremstyle{definition}

\newtheorem{definition}[theorem]{Definition}

\newtheorem{remark}[theorem]{Remark}

\numberwithin{equation}{section}



\DeclareMathOperator{\maxdeg}{maxdeg}

\newcommand{\bd}{\begin{definition}}
	\newcommand{\ed}{\end{definition}}

\DeclareMathOperator{\graph}{graph}

\DeclareMathOperator{\dist}{dist}
\DeclareMathOperator{\didistance}{didist}
\newcommand{\mc}{\mathcal}

\newcommand{\distance}[3]{\ifthenelse{\isempty{#3}}{\dist(#1,#2)}{\dist^{#3}(#1,#2)}}
\newcommand{\didist}[3]{\ifthenelse{\isempty{#3}}{\didistance(#1,#2)}{\didistance^{#3}(#1,#2)}}
\newcommand{\digraph}[3]{\ifthenelse{\equal{#1}{b}}{\mathbb{#2}_{#3}}
	{{#2}_{#3}}}
\newcommand{\linegraph}[3]{\ifthenelse{\equal{#1}{b}}{\mathbb{#2}_{#3}}
	{#2_{#3}}}

\newcommand{\underlyingspace}[3]{\ifthenelse{\equal{#1}{b}}{\mathbb{#2}_{#3}}
	{#2_{#3}}}
\newcommand{\distanceset}[2]{\ifthenelse{\isempty{#2}}{D(#1)}{D^{#2}(#1)}}

\newcommand{\rstr}{\restriction}
\newcommand{\vfi}{\varphi}
\newcommand{\sse}{\subseteq}
\newcommand{\est}{\emptyset}

\newcommand{\fA}{\mathcal{A}}

\newcommand{\fR}{\mathcal{R}}
\newcommand{\fS}{\mathcal{S}}


\begin{document}

	\thanks{}
	
	\keywords{}
	
	\subjclass[2020]{Primary 03E15, Secondary 68Q17}
	
	\title[]{Hyperfiniteness on Topological Ramsey Spaces}
	
	\author{Bal\'azs Bursics}
	
	\address{E\"otv\"os Lor\'and University, Institute of Mathematics, P\'azm\'any P\'eter stny. 1/C, 1117 Budapest, Hungary}
    \email{bursicsb@gmail.com}

	\author{Zolt\'an Vidny\'anszky}
    
	\address{E\"otv\"os Lor\'and University, Institute of Mathematics, P\'azm\'any P\'eter stny. 1/C, 1117 Budapest, Hungary}
    \email{zoltan.vidnyanszky@ttk.elte.hu}

	
	
	\maketitle
	
	\begin{abstract}
		
		We investigate the behavior of countable Borel equivalence relations (CBERs) on topological Ramsey spaces. First, we give a simple proof of the fact that every CBER on $[\N]^\N$ is hyperfinite on some set of the form $[A]^\N$. Using the idea behind the proof, we show the analogous result for every topological Ramsey space.

	\end{abstract}

	\maketitle
	\section{Introduction}
	A \emph{countable Borel equivalence relation (CBER)} on a Polish space $X$ is an equivalence relation with all classes countable that is Borel as a subset of $X^2$. CBERs have a rich and well investigated structure theory (see, e.g., \cite{kechris2024theory}). The natural way to compare two such equivalence relations is the notion of \emph{reduction}: if $E$ and $F$ are CBERs on spaces $X$ and $Y$, $E$ is said to \emph{Borel reduce} to $F$ if there exists a Borel map $\phi:X \to Y$ with
	\[\forall x,x' \in X \ xEx' \iff \phi(x)F\phi(x').\]
	
	The simplest non-trivial class of CBERs is the collection of \emph{hyperfinite} ones, that is, the collection of CBERs which admit a Borel reduction to $\mathbb{E}_0$, the eventual equality equivalence relation on $2^\N$. 
	
	It is a classical fact that there are CBERs that do not reduce to $\mathbb{E}_0$, however, so far, essentially only measure theoretic proofs\footnote{Results of Thomas \cite{thomas2009martin} provide a non-measure theoretic argument, but rely on Martin's conjecture.} are known to establish this fact. This motivates the investigation of CBERs with respect to different notions of largeness. For example, classical results of Hjorth-Kechris \cite{hjorth-kechris1996} and Sullivan-Weiss-Wright \cite{sullivan1986generic} establish hyperfiniteness on comeager sets, and more recently Panagiotopoulos-Wang \cite{panagiotopoulos2022every} and Marks-Rossegger-Slaman \cite{marks2024hausdorff} have shown smoothness (that is, reducibility to equality on $2^\N$) of CBERs restricted to positive sets with respect to the Carlson-Simpson ideal and Hausdorff dimension, respectively. The monograph of Kanovei-Sabok-Zapletal \cite{kanovei2013canonical} contains a large number of deep canonization-type results, mostly in the context of (not necessarily countable) Borel equivalence relations.  
	
	In this paper, we consider CBERs on topological Ramsey spaces in the sense of Todor\v{c}evi\'c, see \cite{ramseyspaces}. The prime example of these spaces is the \emph{Ellentuck space} $[\N]^\N$, that is, the collection of infinite subsets of the natural numbers, endowed with the topology inherited from $2^\N$. The following theorem has been shown by Soare \cite{soare1969sets} in the recursion theoretic context and Mathias (see Kanovei-Sabok-Zapletal \cite{kanovei2013canonical}). 
	
	\begin{theorem}
		\label{t:ellentuck}
		Let $E$ be a CBER on $[\N]^\N$. There exists a set $A \in [\N]^\N$ such that $\restriction{E}{[A]^\N}$ is hyperfinite. 
	\end{theorem}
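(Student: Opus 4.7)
I would aim to find $A \in [\N]^\N$ so that $E|[A]^\N$ is contained, as a relation on $[A]^\N$, in the equivalence relation $\mathbb{E}_0$ of finite symmetric difference (identifying $[A]^\N \subseteq 2^\N$). This suffices for hyperfiniteness: if $\mathbb{E}_0|[A]^\N = \bigcup_n F_n$ is a witnessing increasing union of finite Borel equivalence relations, then $E|[A]^\N = \bigcup_n (F_n \cap E)$ is the analogous witness, since each $F_n \cap E$ is a Borel equivalence relation with finite classes. By Feldman--Moore, write $E = \bigcup_n \graph(g_n)$ with each $g_n : [\N]^\N \to [\N]^\N$ Borel; the target becomes to find $A \in [\N]^\N$ such that for every $n$ and every $x \in [A]^\N$, if $g_n(x) \subseteq A$ then $x \triangle g_n(x)$ is finite.

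\textbf{Fusion via Ellentuck.} The main tool is the Galvin--Prikry/Ellentuck theorem: every Borel subset of $[\N]^\N$ is Ramsey. I would handle one $g_n$ at a time inside a standard fusion. Given the current working set $B$, apply the Ramsey property to $D_n := \{x \in [\N]^\N : x \triangle g_n(x) \text{ is infinite}\}$ to obtain $B' \in [B]^\N$ with $[B']^\N$ either disjoint from $D_n$ or contained in $D_n$. In the disjoint case, $g_n$ already satisfies the requirement on $[B']^\N$. In the contained case I would further refine $B'$ to $B'' \in [B']^\N$ on which $g_n(x) \not\subseteq B''$ for every $x \in [B'']^\N$, so that $g_n$ never pairs two elements of $[B'']^\N$. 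A standard Ellentuck fusion combining these per-$n$ refinements, diagonalizing so that the final $A$ lies, up to a finite initial segment, inside each intermediate set chosen at stage $n$, yields one $A$ that works for every $n$ simultaneously.

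\textbf{Main obstacle.} The delicate point is the refinement in the contained case: given Borel $g$ with $x \triangle g(x)$ infinite on $[B']^\N$, find $B'' \in [B']^\N$ with $g(x) \not\subseteq B''$ for every $x \in [B'']^\N$. A direct Ellentuck application to $\{x \in [B']^\N : g(x) \subseteq B'\}$ may leave us in the alternative where $g$ maps $[C]^\N$ into $[C]^\N$ for every further refinement $C$, and iterating naively need not terminate. I expect one must split first by whether $g(x) \setminus x$ or $x \setminus g(x)$ is infinite (both are Borel conditions, so separable via a further Ellentuck dichotomy), and then run an inner fusion that, at each stage, removes a coordinate from the current set so as to spoil the containment $g(x) \subseteq B''$ while preserving the infinitude of the witnessing discrepancy. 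This Ramsey-theoretic diagonalization is the heart of the proof; combining all $g_n$'s afterwards is routine bookkeeping, and the generalization advertised in the abstract should come from replacing Ellentuck with the Abstract Ellentuck Theorem for topological Ramsey spaces.
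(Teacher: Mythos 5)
Your overall strategy matches the paper's: reduce to showing $E\restriction[A]^\N\subseteq\mathbb{E}_0$, decompose $E$ via Feldman--Moore, and handle one function per step of an Ellentuck fusion. But the step you yourself flag as ``the heart of the proof'' --- given a single Borel $g$ with $x\triangle g(x)$ infinite on $[B']^\N$, produce $B''$ with $g(x)\not\subseteq B''$ for all $x\in[B'']^\N$ --- is left as a speculative sketch, and the sketch does not work as described. An ``inner fusion that removes a coordinate at each stage to spoil the containment'' faces a cardinality obstruction: each fusion stage commits only to a finite stem, there are continuum many $x\in[B'']^\N$ extending any given stem, and their images $g(x)$ can be pairwise wildly different, so no countable sequence of coordinate removals (which must in any case leave an infinite set) is forced to hit $g(x)\setminus x$ for every such $x$. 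Splitting by whether $g(x)\setminus x$ or $x\setminus g(x)$ is infinite does not remove this obstruction. Nothing in your argument uses that $g$ comes from a \emph{countable} equivalence relation, and indeed the statement you are trying to prove fails for arbitrary Borel $g$ with $x\triangle g(x)$ infinite (e.g.\ $g(x)=$ the set of even-indexed elements of $x$ satisfies $g(x)\subseteq x\subseteq B''$ for every $B''$); so any correct completion must exploit the local finiteness of the decomposition, which your proposal never does.

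The paper closes exactly this gap by a different mechanism: since the graphs $\graph(\varphi_n)$ have bounded degree, the Kechris--Solecki--Todor\v{c}evi\'c theorem gives a finite Borel vertex coloring, and applying Galvin--Prikry to that \emph{coloring} (rather than to the set $D_n$) produces a monochromatic, hence $G$-independent, set of the form $[t,A'\setminus t]$; an auxiliary bounded-degree graph $G^*$ (quantifying over the subsets of the finite stem $t$) upgrades independence of $[t,A'\setminus t]$ to $\graph(\varphi_n)\restriction[A']^\N\subseteq\mathbb{E}_0$. This KST-plus-Ramsey step is the missing idea; without it, or a substitute of comparable strength, your argument does not go through.
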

	
	The proof of a slightly more general fact than the above one in Kanovei-Sabok-Zapletal \cite{kanovei2013canonical} uses forcing, and a non-straightforward trick with ``even-odd" encoding. We give a few line elementary argument, the idea behind which generalizes to arbitrary topological Ramsey spaces, yielding the following: 
	
	\begin{theorem} \label{t:ramsey hyperfin}
		Let $\mathcal{R}$ be a topological Ramsey space and $E$ be a CBER on $\mathcal{R}$. Then there is a set $A \in \mc{R}$ such that $\restriction{E}{[A]}$ is hyperfinite. 
	\end{theorem}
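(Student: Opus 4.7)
The plan is to mirror the elementary argument for \cref{t:ellentuck} sketched in the introduction, with the Galvin--Prikry theorem replaced by the abstract Ellentuck theorem that characterizes topological Ramsey spaces.

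First, I apply the Feldman--Moore theorem to decompose $E = \bigcup_n \graph(T_n)$, where each $T_n \from \mathcal{R} \partialto \mathcal{R}$ is a Borel partial injection whose graph covers one ``slice'' of every $E$-class. For each $n$, I use the abstract Ellentuck theorem on a suitable Borel set to find $B_n \in \mathcal{R}$ such that the action of $T_n$ on $[B_n]$ is controlled by the finite approximation maps $r_k$ of $\mathcal{R}$---roughly, that for $X \in [B_n]$ either $T_n(X) \notin [B_n]$, or $T_n(X)$ shares arbitrarily long approximations with $X$. A fusion argument using the pigeonhole-type axioms of $\mathcal{R}$ then produces a single $A \in \mathcal{R}$ below every $B_n$ in the quasi-order of the space, on which every $T_n$ simultaneously satisfies the above dichotomy.

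Finally, on $[A]$, I define $F_k$ to be the Borel subequivalence relation of $E$ consisting of pairs $X, Y$ whose approximations $r_k(X)$ and $r_k(Y)$ are compatible in the way determined by the canonical forms of the $T_n$. The uniform control on the $T_n$ should ensure that each $F_k$ is a finite Borel equivalence relation and that $E \cap [A]^2 = \bigcup_k F_k$, witnessing hyperfiniteness.

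The main obstacle is identifying the right ``canonical form'' in the second step. In the Ellentuck case the symmetric difference of subsets of $\N$ is a natural quantitative gadget for canonizing a Borel map, whereas in the abstract setting it must be replaced by a condition phrased purely in terms of the approximation maps $r_k$ and the pigeonhole axiom of $\mathcal{R}$. I expect that once this canonization is correctly formulated, the fusion step and the verification of hyperfiniteness follow routinely from the axioms of topological Ramsey spaces without further combinatorial input.
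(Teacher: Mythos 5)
The skeleton you describe (Feldman--Moore decomposition, the abstract Ramsey theorem, a fusion argument) matches the paper's, but the step you defer---``identifying the right canonical form''---is exactly where the paper's actual new idea lives, and the specific form you propose cannot work. In a topological Ramsey space the finite approximations are coherent: by axiom A.1, $r_n(X)=r_n(Y)$ forces $r_k(X)=r_k(Y)$ for all $k<n$, and the sequence of approximations determines the point. Hence ``$T_n(X)$ shares arbitrarily long approximations with $X$'' literally means $T_n(X)=X$, so your dichotomy would canonize each $T_n$ to the identity on $[A]$ and prove that $\restriction{E}{[A]}$ is \emph{smooth}. That is false already on the Ellentuck space: $\mathbb{E}_0$ is a CBER on $[\N]^\N$ that is non-smooth on every $[A]^\N$. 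The reason the concrete argument for Theorem~\ref{t:ellentuck} does not transfer is that Lemma~\ref{l:indep2} exploits a feature special to $[\N]^\N$, namely the finite set of ``modifications'' $B\setminus r$, $r\sse t$, which lets one absorb the initial segment into a bounded-degree auxiliary graph $G^*$ and conclude containment in $\mathbb{E}_0$. A general Ramsey space has no analogous finite-modification structure and no canonical $\mathbb{E}_0$-like target, so there is no reason to expect each $\graph(T_n)$ to be canonized into a single fixed hyperfinite relation.

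The paper's route is genuinely different at this point: instead of canonizing the maps, it proves a sufficient condition for hyperfiniteness in terms of sparse covers (Theorem~\ref{t:separation2}: if $E=\bigcup_n G_n$ with $G_n$ bounded degree and there are Borel sets $B_n$ that are $f(n)$-separated in $G_n$, with $f$ growing fast enough, then $E$ is hyperfinite on the set of points lying in infinitely many $B_n$), and then uses the Ramsey property plus a fusion over all approximations $s\le_{\text{fin}} r_n(A_n)$ to produce such a cover with $[A]$ contained in the limsup of the $B_n$ (Lemma~\ref{ramsey separation}). The only Ramsey-theoretic input is the easy Lemma~\ref{l:graph canonization}, which makes a single $[a,A']$ $k$-separated via a finite Borel coloring of a power of the graph; the hyperfiniteness is then witnessed directly by an increasing sequence of finite-diameter subrelations $H_n$, not by a reduction to an $\mathbb{E}_0$-analogue. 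If you want to complete your proposal, you should replace the canonization step by something of this kind; as stated, the gap is not a routine verification but the main content of the theorem.
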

	
	Along the way we also isolate a condition that could be interesting on its own. A version of this notion has been considered by Marks and Unger \cite{marksungerbaire}, to establish a so-called toast structure on co-meager sets, with respect to bounded degree Borel graphs (see also \cite{gao2022forcing,conley2016bound,grebik_rozhon2021toasts_and_tails,brandt_chang_grebik_grunau_rozhon_vidnyaszky2021LCLs_on_trees_descriptive}).
	
	Recall the following definition. 
	\begin{definition}
		Let $G$ be a graph, $B \subseteq V(G)$. The set $B$ is called \emph{$k$-separated} if for any $x \neq x' \in B$ we have $\dist_{G}(x,x')>k$.
	\end{definition}
	
	The following observation is our main tool towards Theorem \ref{t:ramsey hyperfin}. Roughly speaking, it says that if the space can be covered by very sparse sets, w.r.t. a bounded degree decomposition of $E$ so that every point is covered infinitely often, then $E$ is hyperfinite. 
	
	\begin{theorem}
		\label{t:separation2}
		Let $E$ be a CBER on the space $X$, let $(G_n)_{n \in \N}$ be an increasing sequence of bounded degree Borel graphs\footnote{Throughout the paper, graphs are not necessarily assumed to be irreflexive.} such that $\bigcup_n G_n=E$, and let $f:\N\to\N$ be such that $\forall n\in\N\; f(n+1)\ge2\cdot(f(n)+1)$. Moreover, assume that $B_n \subseteq X$ are Borel sets so that every $B_n$ is $f(n)$-separated in $G_n$. 
		
		Then $\restriction{E}{B}$ is hyperfinite, where $B=\{x:\exists^\infty n \ (x \in B_n)\}$.
	\end{theorem}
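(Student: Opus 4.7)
The plan is to write $\restriction{E}{B}$ as an increasing union of finite Borel equivalence relations $(F_n)_{n \in \N}$ and then invoke the classical Slaman--Steel theorem to conclude hyperfiniteness. The $F_n$'s will be built recursively: at stage $n$, I fuse the $G_n$-$1$-ball $P_n(z) := \{y : d_{G_n}(y, z) \leq 1\}$ around each $z \in B_n$ with the equivalence relation $F_{n-1}$ already constructed. After replacing $(B_n, G_n, f)$ by a tail (which preserves $B = \limsup_n B_n$ and the growth condition on $f$), I may assume $f(0)$ is large, in particular that the $P_0(z)$'s are pairwise disjoint. I take $F_0$ to be the equivalence relation with non-trivial classes $P_0(z)$, $z \in B_0$, and inductively define $F_n$ as the smallest equivalence relation containing $F_{n-1}$ together with the cliques $\{(y, y') : y, y' \in P_n(z)\}$ for $z \in B_n$.

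The main obstacle is to prove the classes of $F_n$ stay finite; the threat is cascading, where a single $F_{n-1}$-class of large $G_n$-diameter could meet several different balls $P_n(z_i)$ and trigger an unbounded chain of fusions at level $n$. To control this I would track $D_n := \sup\{\mathrm{diam}_{G_n}(C) : C \text{ is an } F_n\text{-class}\}$. The key observation is that if some $F_{n-1}$-class met both $P_n(z)$ and $P_n(z')$ for distinct $z, z' \in B_n$, the triangle inequality in $G_n$ would force $d_{G_n}(z, z') \leq D_{n-1} + 2$, contradicting the $f(n)$-separation of $B_n$ as soon as $f(n) > D_{n-1} + 2$. In the no-cascade regime, the $F_n$-class of $z \in B_n$ is simply $P_n(z)$ together with the $F_{n-1}$-classes meeting it, yielding the recursion $D_n \leq 2D_{n-1} + 2$ and so $D_n$ growing like $2^n$. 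The doubling condition $f(n+1) \geq 2(f(n) + 1)$ makes $f(n)$ grow like $2^n$ as well, with sufficient leading constant (thanks to the initial truncation) to maintain $f(n) > D_{n-1} + 2$ throughout; cascading therefore never occurs and finiteness of the classes propagates by induction.

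It then remains to check $\bigcup_n F_n = \restriction{E}{B}$. The inclusion $\subseteq$ is immediate since every generator of $F_n$ lies in $E$. For the reverse, given $x, y \in B$ with $xEy$, pick $N$ with $(x, y) \in G_N$, so that $d_{G_n}(x, y) \leq 1$ for every $n \geq N$; since $x \in B$ there exists $n \geq N$ with $x \in B_n$, and then $y \in P_n(x) \subseteq [x]_{F_n}$, whence $xF_ny$. Borelness of each $F_n$ follows from Borelness of its generators together with finiteness of its classes, so the Slaman--Steel theorem yields the desired hyperfiniteness of $\restriction{E}{B}$.
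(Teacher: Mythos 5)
Your proposal is correct and follows essentially the same route as the paper: the paper builds an increasing sequence of graphs $H_n$ by adding, at stage $n+1$, all $G_{n+1}$-edges incident to $B_{n+1}$, so its connected components are exactly your $F_n$-classes, and its key lemma (a component cannot contain two points of $B_{n+1}$, via the separation hypothesis together with a diameter bound that doubles at each stage) is the same as your no-cascade argument. The only cosmetic difference is that the paper's induction hypothesis is directly ``diameter $\le f(n)$'', which meshes with the stated growth condition on $f$ without your tail-truncation and auxiliary quantity $D_n$.
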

	
	It was already suggested by Marks and Unger that such a construction yields a proof of the theorem of Hjorth-Kechris \cite{hjorth-kechris1996} and Sullivan-Weiss-Wright \cite{sullivan1986generic} mentioned above that every CBER on a Polish space is hyperfinite on a co-meager set. In fact, unlike Segal's proof (see \cite{kechris2004topics}), this proof does not rely on the Kuratowski-Ulam theorem. We include this argument for the sake of completeness.
	\subsection*{Roadmap} In Section \ref{s:prel} we collect all the facts we need about topological Ramsey spaces. Section \ref{s:ellentuck} contains a short argument establishing the canonization result on the space $[\N]^\N$. Then, in Section \ref{s:main} we show our theorem about sufficiently separated covers and then apply it to obtain hyperfiniteness on topological Ramsey spaces and on comeager sets. Finally, Section \ref{s:problems} contains some open problems. Sections \ref{s:ellentuck}, \ref{s:suffsep} and \ref{s:generic} do not use the definition of a topological Ramsey space.

	\subsection*{Acknowledgments} 
	The authors were supported by Hungarian Academy of Sciences Momentum Grant no. 2022-58 and National Research, Development and Innovation Office (NKFIH) grant no.~146922.
	
	\section{Preliminaries}
	\label{s:prel}

	\subsection{Topological Ramsey spaces}

	A \emph{topological Ramsey space} in the sense of Todor\v{c}evi\'c \cite{ramseyspaces} is a triple $(\mathcal{R},\le,r)$ satisfying a certain set of axioms, where $\mathcal{R}$ is a nonempty set, $\le$ a quasi-ordering on $\mathcal{R},$ and $r:\mathcal{R}\times\N\to\mathcal{AR}$. The range of $r$ can be thought of as the collection of finite approximations to elements of $\mathcal{R}$. We will use capitals $A,B,\ldots$ for the elements of $\mathcal{R}$, and $a,b,\ldots$ for their approximations. We denote $r_n(.)=r(.,n)$.
	
	The relation $\le$ is encoded on the level of approximations by a quasi-ordering $\le_{\text{fin}}$ on $\mathcal{AR}$.
	
	For $a\in\mathcal{AR}$ and $B\in\mathcal{R}$ define 
	$$[a,B]=\{A\in\mathcal{R}:A\le B \wedge (\exists n)\; r_n(A)=a\}.$$
	
	We also use the abbreviations $[B]=\{A\in\mathcal{R}:A\le B \}$ and $[n,B]=[r_n(B),B].$
	
	There are multiple natural topologies on $\fR$: the \emph{metric topology} of $\fR$ inherited from the Baire space $\fA\fR^\N,$ and the so-called \emph{Ellentuck topology} generated by $\{[a,A]:\; a\in\fA\fR,\; A\in\fR\}.$ We work with the metric topology, unless stated otherwise.

	The next definition gives a standard way of constructing elements of $\fR$ with some desirable property.
	
	\begin{definition}
		A \emph{fusion sequence} is a sequence $(A_n)_{n\in \N}$ in $\fR$ such that $A_{n+1}\in [n,A_n]$ for every $n\in\N.$
		
		The \emph{limit} of a fusion sequence $(A_n)_{n\in\N}$, denoted $\lim A_n$, is the unique $A\in\fR$ such that $A\in[n,A_n]$ for every $n\in\N.$ 
	\end{definition}
	
	Note that for the existence of the limit of a fusion sequence we assume that $\fR$ is closed in $\fA\fR^\N$ with respect to the metric topology. This is not listed among the axioms of topological Ramsey spaces in \cite{ramseyspaces}, however, this property is assumed for the main results of the theory. We make use of the following properties of fusion sequences.

	\begin{proposition}\label{fusion}
		Let $(A_n)$ be a fusion sequence and $C\in[\lim A_n].$ Then 
        \begin{enumerate}
            \item $\lim A_n\le A_k$ for every $k$,
            \item there are infinitely many $n$ with $C\in[s,A_n]$ for some $s\le_{fin}r_n(A_n).$
        \end{enumerate}
	\end{proposition}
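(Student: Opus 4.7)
The plan is to read off both parts directly from the definition of the limit of a fusion sequence, using only the elementary compatibility axioms between $\le$ and $\le_{\text{fin}}$ that come with any topological Ramsey space (essentially axioms A.1--A.3 of \cite{ramseyspaces}).

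For part (1), I would simply unpack definitions. Writing $A=\lim A_n$, the assumption that $A\in[n,A_n]=[r_n(A_n),A_n]$ for every $n$ already includes the conjunct $A\le A_n$, so the statement is immediate. This half is really just bookkeeping.

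For part (2), the first move is to note that $C\le A\le A_n$ by transitivity of $\le$ together with part (1), so that $C\le A_n$ holds unconditionally for every $n$. The remaining task is to find infinitely many $n$ for which some finite approximation of $C$ sits $\le_{\text{fin}}$-below $r_n(A_n)$. Here I would invoke the standard axiom that $X\le Y$ implies that for every $m$ there is an index $k=k(m)$ with $r_m(X)\le_{\text{fin}} r_k(Y)$, applied to $C\le A$. A quick length argument (the length function $|r_m(\cdot)|$ is strictly increasing and $a\le_{\text{fin}} b$ forces $|a|\le|b|$) then gives $k(m)\to\infty$ as $m\to\infty$, so the indices $k(m)$ are unbounded.

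The final ingredient is the identification $r_k(A)=r_k(A_k)$ for every $k$: since $A\in[k,A_k]$ supplies some $j$ with $r_j(A)=r_k(A_k)$, and the strict monotonicity of $|r_i(\cdot)|$ pins $j$ down to $j=k$. Chaining this with the previous step yields $r_m(C)\le_{\text{fin}} r_{k(m)}(A)=r_{k(m)}(A_{k(m)})$, so setting $s=r_m(C)$ and $n=k(m)$ we have $C\in[s,A_n]$ with $s\le_{\text{fin}} r_n(A_n)$ for infinitely many $n$. The only real ``obstacle'' is that the paper does not rewrite the Todor\v{c}evi\'c axioms, so one must be careful to invoke exactly the right packaged property; but each individual step is a direct unpacking of the definitions.
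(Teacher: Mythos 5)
Your argument is correct in substance; note that the paper states Proposition \ref{fusion} without proof in the preliminaries, so there is no in-paper argument to compare against, and your direct verification from the definitions is exactly what is called for. Part (1) is indeed immediate, since $\lim A_n\in[n,A_n]$ contains the conjunct $\lim A_n\le A_n$ by the definition of $[a,B]$. For part (2) the chain you set up --- $C\le A\le A_n$ by transitivity, the finitization axiom applied to $C\le A$ to produce $k(m)$ with $r_m(C)\le_{\text{fin}}r_{k(m)}(A)$, and the identification $r_k(\lim A_n)=r_k(A_k)$ forced by the axiom that $r_j(X)=r_k(Y)$ implies $j=k$ --- is sound. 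The one justification I would patch is the ``length argument'': the implication $a\le_{\text{fin}}b\Rightarrow|a|\le|b|$ is not among Todor\v{c}evi\'c's official axioms (it holds in the standard examples but is not postulated in general). You do not need it: the finitization axiom also demands that $\{a:a\le_{\text{fin}}b\}$ be finite for every fixed $b$, and the approximations $r_m(C)$ are pairwise distinct, so only finitely many $m$ can have $k(m)$ below any fixed bound; hence $\{k(m):m\in\N\}$ is unbounded, which is all your argument uses.
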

	
	Apart from the technique of fusion sequences, we use the Ramsey theoretic statement below.
	
	\begin{theorem}\label{ramsey property}
		Let $\fR$ be a topological Ramsey space, $c:\fR\to k$ a finite Borel coloring, $A\in\fR,$ and $s\le_{fin}r_n(A)$. Then there exists $B\in[n,A]$ such that $\rstr{c}{[s,B]}$ is monochromatic.
	\end{theorem}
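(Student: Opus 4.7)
The plan is to deduce this from the Abstract Ellentuck Theorem of \Todorcevic{} \cite{ramseyspaces}, which asserts that every metrically Borel subset $\fX \subseteq \fR$ is completely Ramsey: for every non-empty Ellentuck-basic open set $[a,C]$, there exists $B \in [a,C]$ with $[a,B] \subseteq \fX$ or $[a,B] \cap \fX = \emptyset$. Applied to the Borel color classes $\fX_i = c^{-1}(i)$ for $i < k$, this dichotomy can be iterated to reduce the finite coloring case to the two-color case.

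Concretely, I would build a decreasing chain $A = A_0 \ge A_1 \ge \cdots \ge A_k$ with each $A_i \in [n,A]$ (so that $r_n(A_i) = r_n(A)$) such that for every $i < k$ the neighborhood $[s,A_{i+1}]$ is either contained in or disjoint from $\fX_i$. At stage $i$, the Abstract Ellentuck Theorem applied to $\fX_i$ and the Ellentuck-open set $[s,A_i]$ (non-empty because $s \le_{\text{fin}} r_n(A_i)$) produces an $A_{i+1}' \in [s,A_i]$ for which $[s,A_{i+1}']$ is decided with respect to $\fX_i$; a further refinement using the combinatorial axioms of a topological Ramsey space yields $A_{i+1} \le A_{i+1}'$ with $r_n(A_{i+1}) = r_n(A)$, and the decision on $\fX_i$ is preserved since $[s,A_{i+1}] \subseteq [s,A_{i+1}']$. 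Setting $B = A_k$, the set $[s,B]$ is non-empty and decided by every $\fX_i$; as the $\fX_i$ partition $\fR$, exactly one of them contains $[s,B]$, so $\restriction{c}{[s,B]}$ is constant.

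The main technical obstacle is the refinement step: given $A_{i+1}' \in [s,A_i]$, one must produce $A_{i+1} \le A_{i+1}'$ with $r_n(A_{i+1}) = r_n(A)$. This is where the structural axioms of a topological Ramsey space enter, allowing the stem $r_n(A)$ to be grafted onto a suitable thinning of $A_{i+1}'$; once this step is available, the iteration proceeds in the familiar Galvin--Prikry pattern. An alternative route would replace this refinement by a fusion construction in which stage $i$ not only decides $\fX_i$ but also constrains the $n$-th approximation, using \Cref{fusion} to pass to the limit.
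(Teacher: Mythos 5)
The paper does not prove this statement at all: it is quoted as the abstract Galvin--Prikry/Ellentuck theorem from \cite{ramseyspaces}, so your derivation is being judged on its own. Your overall plan --- apply the Abstract Ellentuck Theorem to each metrically Borel color class $\fX_i=c^{-1}(i)$ and iterate the dichotomy $k$ times --- is indeed the standard way to obtain this statement, and the bookkeeping you describe ($[s,A_{i+1}]\subseteq[s,A_{i+1}']$ when $A_{i+1}\le A_{i+1}'$, exactly one class containing the final nonempty $[s,B]$) is fine.

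The genuine gap is the step you yourself flag as ``the main technical obstacle,'' and it does not go away: if you invoke the theorem in the form that produces $A_{i+1}'\in[s,A_i]$, then $A_{i+1}'$ has $s$ as its $|s|$-th approximation, and in general \emph{no} $A_{i+1}\le A_{i+1}'$ can satisfy $r_n(A_{i+1})=r_n(A)$. Already in $[\N]^\N$: an element of $[s,A_i]$ is a set beginning with $s$, hence omitting the elements of $r_n(A)\setminus s$ lying above $\max s$, and a further subset cannot recover them; no axiom of topological Ramsey spaces lets you ``graft'' a lost stem back on. The correct repair is to use the theorem in the form in which Ramseyness is actually defined in \cite{ramseyspaces}: for every nonempty $[s,A_i]$ there is $A_{i+1}\in[\mathrm{depth}_{A_i}(s),A_i]$ with $[s,A_{i+1}]$ decided for $\fX_i$. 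Under the intended reading of the hypothesis $s\le_{\text{fin}}r_n(A)$ (which for the standard finitizations gives $\mathrm{depth}_A(s)=n$), this set is exactly $[n,A_i]$, each $A_{i+1}$ automatically has $r_n(A_{i+1})=r_n(A)$, nonemptiness of $[s,A_{i+1}]$ follows from \textbf{A.3}(1), and the iteration closes with no refinement step at all. Be aware that when $\mathrm{depth}_A(s)<n$ strictly the conclusion can genuinely fail --- take $\fR=[\N]^\N$, $A=\N$, $s=\{0\}$, $n=2$ and $c(C)$ the indicator of $1\in C$: every $B\in[2,\N]$ contains $1$, so $[s,B]$ meets both colors --- which shows the obstacle you identified is not merely technical and cannot be absorbed into a fusion argument either.
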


	\subsection{The Ellentuck space} The space $[\N]^\N$ has a topological Ramsey structure by defining $\le$ and $r$ as follows:

    \begin{itemize}
        \item $A\le B\iff A\sse B$
        \item $r_n(A)=\{a_k: k< n\}$, where $A=\{a_k:k\in\N\}$ with $a_k<a_{k+1}$ for every $k\in\N$
    \end{itemize}
    
    The notation $[a,B]$ defined above for $A\in[\N]^\N$ and $a\in[\N]^n$ gives $[a,B]=\{A\sse B: r_n(A)=a\},$ but instead we stick to the classical notation \[[a,B]=\{A\in[\N]^\N: a\subset A\sse a\cup B\}\] where we require $\max a<\min B.$
	
	A fusion sequence in $[\N]^\N$ is a sequence $(A_n)_{n\in\N}$ so that $A_{n+1}\sse A_n$ for every $n\in\N$ and the set of $n$ smallest elements is the same for every $A_m$ with $m\ge n$, and $\lim A_n =\cap A_n$. 
	
	Here Theorem \ref{ramsey property} corresponds to the well-known theorem of Galvin-Prikry.
	It is a classical result of Silver and also a consequence of the theory of topological Ramsey spaces that the conclusion of the Galvin-Prikry theorem remains true even when we consider colorings that are \emph{Souslin measurable}, i.e., that are measurable w.r.t. the $\sigma$-algebra generated by analytic sets. A further consequence of this result, using a standard fusion argument, is the following.
	
	\begin{theorem}
		\label{t:function} Let $f:[t,A] \to [\N]^\N$ be a Souslin measurable function. Then there is some $B \in [A]^\N$ such that $\restriction{f}{[t,B]}$ is continuous.
	\end{theorem}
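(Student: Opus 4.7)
The plan is a standard diagonal fusion inside $[t, A]$ that uses the Souslin measurable strengthening of Galvin--Prikry (invoked just before the statement) to decide, for more and more initial segments $s$ extending $t$ and candidate values $u \in [\N]^{<\N}$, whether the two-coloring $X \mapsto [r_n(f(X)) = u]$ is constantly $0$ or $1$ on $[s, \cdot]$. Continuity of $\restriction{f}{[t, B]}$ is equivalent to $r_n \circ f$ being locally constant for every $n$, so the goal is to produce constancy on the basic open set $[s, B]$ for a sufficiently long initial segment $s$ of each point of $[t, B]$.

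First I would enumerate all triples $(s_i, n_i, u_i)$, $i \in \N$, with $s_i$ a finite initial segment extending $t$, $n_i \in \N$, and $u_i \in [\N]^{n_i}$. Then I would build a fusion sequence $B_0 = t \cup A \supseteq B_1 \supseteq \cdots$ with $B_{k+1} \in [|t|+k, B_k]$, freezing the common initial segment $v_k$ of length $|t|+k$. At stage $k$, for every $i \le k$ with $s_i \subseteq v_k$, I apply Theorem~\ref{ramsey property} in its Souslin measurable form to the two-coloring $X \mapsto [r_{n_i}(f(X)) = u_i]$ on $[s_i, B_k]$, successively refining inside $[|t|+k, B_k]$ so that each such coloring becomes constant. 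Let $B_{k+1}$ be the final refinement produced at stage $k$, and set $B = \lim_k B_k \in [t, A]$.

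To verify continuity, fix $X \in [t, B]$, $n \in \N$, and put $u := r_n(f(X))$. Since $B = \bigcup_k v_k$, there exists $\ell$ such that $s := r_\ell(X) \subseteq v_k$ for some $k$ at least as large as the enumeration index of $(s, n, u)$. At that stage the coloring $Y \mapsto [r_n(f(Y)) = u]$ was forced to be constant on $[s, B_{k+1}]$, and the witness $X$ itself pins the constant value to $u$. Hence every $Y \in [s, B] \subseteq [s, B_{k+1}]$ satisfies $r_n(f(Y)) = u$, so the basic neighborhood $[s, B]$ of $X$ in $[t, B]$ is mapped to a single point by $r_n \circ f$, giving continuity at $X$. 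The only real work is the bookkeeping needed to intertwine the enumeration with the fusion so that every triple relevant to a point of $[t, B]$ is processed at a stage when its $s_i$ is already covered by $v_k$, and this is handled by a standard diagonal scheme.
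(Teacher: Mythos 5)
Your argument is correct and is exactly the ``standard fusion argument'' the paper alludes to (the paper gives no written proof of Theorem~\ref{t:function}, citing it as a consequence of the Souslin-measurable Galvin--Prikry theorem): diagonalize over triples $(s,n,u)$, homogenize the clopen-valued colorings $X\mapsto[r_n(f(X))=u]$ along a fusion sequence, and read off local constancy of each $r_n\circ f$ on the limit. The remaining bookkeeping you defer is indeed routine.
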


\subsection{General Ramsey spaces} \label{general ramsey}
	
	In the monograph \cite{ramseyspaces} a more general version of topological Ramsey spaces, namely, \emph{Ramsey spaces} are also discussed. These allow a stronger form of Theorem \ref{ramsey property}, where the colored objects differ from the base objects of canonization.
	
    A Ramsey space is a tuple $(\mathcal{R},\mathcal{S}, \le,\le^\circ,r,s)$ obeying a certain set of axioms, where $\mathcal{R},\mathcal{S}$ are nonempty sets, $r:\mathcal{R}\times\N\to\mathcal{AR}$ and $s:\mathcal{S}\times\N\to\mathcal{AS}$ are approximation functions, $\le$ is a quasi-ordering on $\mathcal{S},$ and $\le^\circ$ is a subset of $\mathcal{R}\times\mathcal{S}.$ Here $\mathcal{R}$ is the set of objects whose colorings can be canonized, and $\le^\circ$ determines the connection between $\mathcal{R}$ and $\mathcal{S}.$

    For $a\in\mathcal{R}$ and $X\in \mathcal{S}$ let
    $$[a,X]=\{A\in\mathcal{R}: A\le^\circ X\wedge(\exists n)r_n(A)=a\},$$
    specifically, 
    $$[\est,X]=\{A\in\mathcal{R}:A\le^\circ X\}.$$

    Here, the simplest Ramsey-type canonization theorem is the following.

    \begin{theorem}
		Let $(\fR,\fS,\le,\le^\circ,r,s)$ be a Ramsey space, and $c:\fR\to k$ a finite Borel coloring. Then there exists $X\in\fS$ such that $\rstr{c}{[\est,X]}$ is constant.
	\end{theorem}

	\section{The case of $[\N]^\N$}
	\label{s:ellentuck}
	In this section, we give a simple proof of the canonization theorem on the Ellentuck space and reprove the mentioned results of Soare and  Mathias. This relies on the following easy lemma. 
	
	\begin{lemma}
		\label{l:suslin} Let $G$ be an irreflexive bounded degree Borel graph on $[\N]^\N$, and suppose that $\max t<\min A$ for $t \in [\N]^n$ and $A\in [\N]^\N$. Then there is $A' \in [t,A]$ such that $[t,A'\setminus t]$ is $G$-independent. 
	\end{lemma}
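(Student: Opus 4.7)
My approach is to reduce the lemma to a finite-color proper vertex coloring of $G$ and then apply Galvin--Prikry. Since $G$ is a Borel graph of some finite maximum degree $d$ on the Polish space $[\N]^\N$, the Kechris--Solecki--Todor\v{c}evi\'c bound on the Borel chromatic number of bounded-degree Borel graphs yields a Borel proper vertex coloring $\chi : [\N]^\N \to \{0,1,\ldots,d\}$ of $G$: that is, $\chi$ is Borel and $\chi(X) \ne \chi(Y)$ whenever $\{X,Y\} \in G$.

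I then apply Galvin--Prikry (the $[\N]^\N$ instance of Theorem~\ref{ramsey property}) to the finite Borel coloring $\chi$ restricted to $[t,A]$. This produces an infinite $B \subseteq A$ with $\max t < \min B$ and $\chi$ constant on $[t,B]$. Setting $A' := t \cup B \in [t,A]$, one has $A' \setminus t = B$, so $[t,A' \setminus t] = [t,B]$ is $\chi$-monochromatic; any $G$-edge within this set would force two vertices of the same $\chi$-value, contradicting properness of $\chi$. Hence $[t,A'\setminus t]$ is $G$-independent, as required.

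I do not anticipate a serious obstacle: the argument is a direct chaining of two classical results. The only non-trivial input is the Kechris--Solecki--Todor\v{c}evi\'c bound; the Galvin--Prikry step is an immediate use of a tool already recorded in the preliminaries. If one wished to avoid citing KST, the coloring $\chi$ can be produced inline by a standard greedy construction that recursively picks, for each $X$, the least color not yet used on any of its (at most $d$) neighbors, along a Borel linear order of $[\N]^\N$. The bounded-degree hypothesis is used exactly once, to guarantee the finiteness of the color set, which is what permits Galvin--Prikry to apply.
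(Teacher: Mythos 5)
Your proposal is correct and follows essentially the same route as the paper: invoke the Kechris--Solecki--Todor\v{c}evi\'c finite Borel chromatic number bound for bounded degree Borel graphs, apply Galvin--Prikry to the resulting finite Borel coloring on $[t,A]$, and observe that a monochromatic $[t,A'\setminus t]$ cannot contain a $G$-edge. No gaps.
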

	\begin{proof}
		By a classical result of Kechris-Solecki-Todor\v{c}evi\'c \cite{KST}, every irreflexive bounded degree Borel graph admits a finite Borel vertex coloring. Let $c:[\N]^\N \to k$ be such a coloring of $G$. By the Galvin-Prikry theorem (or, Theorem \ref{ramsey property}), there is some $A' \in [t, A]$ and $j$ with $[t,A'\setminus t] \subseteq c^{-1}(j)$. Then $[t,A'\setminus t]$ cannot contain a $G$-edge.
	\end{proof}
	
	A slightly more sophisticated version will be sufficient for our purposes. 
	
	\begin{lemma}
		\label{l:indep2} Let $G$ be a bounded degree Borel graph on $[\N]^\N$, and suppose that $\max t<\min A$ for $t \in [\N]^n$ and $A\in [\N]^\N$. Then there is $A' \in [t,A]$ such that $\restriction{G}{[A']^\N} \subseteq \mathbb{E}_0.$
	\end{lemma}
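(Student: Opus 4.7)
The plan is to construct $A'$ as the limit of a fusion sequence $A_0 \supseteq A_1 \supseteq \dots$ in $[t, A]$, where at each stage $n$ I apply Lemma~\ref{l:suslin} to an auxiliary bounded-degree Borel graph $H_n$ defined on the tails. The role of $H_n$ is to record, for each pair of distinct ``prefix types'' on the first $n+1$ elements of $A'$, the possibility of a $G$-edge connecting those prefixes via distinct tails. Killing $H_n$-edges forces the tails to agree whenever the prefixes differ, which makes every $G$-edge in $[A']^\N$ lie in $\mathbb{E}_0$.

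Concretely, set $P_0 = t$, $T_0 = A$, and $A_0 = t \cup A$. At stage $n$, let $b_n = \min T_n$, $P_{n+1} = P_n \cup \{b_n\}$, and $T'_n = T_n \setminus \{b_n\}$. Define the graph $H_n$ on $[T'_n]^\N$ by putting $\{U, V\} \in H_n$ iff $U \ne V$ and there exist distinct $s_1, s_2 \subseteq P_{n+1}$ with $(s_1 \cup U, s_2 \cup V) \in G$. Since $G$ has bounded degree $d$, for each fixed $U$ and each ordered pair $(s_1, s_2)$ of subsets of $P_{n+1}$, the set $s_1 \cup U$ has at most $d$ $G$-neighbors, each determining at most one candidate $V$ (namely the neighbor minus $s_2$, provided the neighbor has the right intersection with $P_{n+1}$). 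Summing over the at most $4^{|P_{n+1}|}$ choices of $(s_1, s_2)$, the graph $H_n$ is Borel of bounded degree, so Lemma~\ref{l:suslin} applied with empty prefix to $H_n$ on $[T'_n]^\N$ yields $T_{n+1} \subseteq T'_n$ such that $[T_{n+1}]^\N$ is $H_n$-independent. Setting $A_{n+1} = P_{n+1} \cup T_{n+1}$ and $A' = t \cup \{b_n : n \in \N\}$ completes the construction; clearly $A' \in [t, A]$.

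For the verification, take $X \ne Y$ in $[A']^\N$ with $X G Y$. Since $\bigcup_n P_n = A'$, there is some $n$ with $X \cap P_{n+1} \ne Y \cap P_{n+1}$. Let $s_1 = X \cap P_{n+1}$, $s_2 = Y \cap P_{n+1}$, $U = X \setminus P_{n+1}$, $V = Y \setminus P_{n+1}$; then $s_1 \ne s_2 \subseteq P_{n+1}$, and $U, V$ are infinite subsets of $A' \setminus P_{n+1} \subseteq T_{n+1}$. If $U \ne V$, then $\{U, V\}$ is an $H_n$-edge inside $[T_{n+1}]^\N$, contradicting $H_n$-independence. Hence $U = V$, so $X \triangle Y \subseteq P_{n+1}$ is finite, giving $X \mathbb{E}_0 Y$. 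The main subtlety, which I expect to be the principal obstacle, is the design of the stage-$n$ condition: one must not try to kill all $G$-edges between distinct prefixes (this would kill $\mathbb{E}_0$-edges too), but only those arising from distinct tails, which is exactly what the auxiliary graph $H_n$ captures. Once this is set up, the boundedness of $H_n$'s degree---inherited from that of $G$ together with the finiteness of $P_{n+1}$---is what lets Lemma~\ref{l:suslin} close the argument.
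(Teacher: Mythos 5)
Your proof is correct, and its engine is the same as the paper's: an auxiliary Borel graph that declares an edge between two tails exactly when some choice of distinct finite prefixes turns them into a $G$-edge, a degree bound of the form $4^{|\text{prefix}|}\cdot\maxdeg G$, and an appeal to Lemma~\ref{l:suslin}. The difference is that you stratify over the growing prefixes $P_{n+1}$ and run a fusion, applying Lemma~\ref{l:suslin} infinitely often, while the paper gets away with a \emph{single} application using only the fixed prefix $t$: define $G^*$ on $[t,A]$ by $(B,C)\in G^*$ iff $B\ne C$ and $(B\setminus r,C\setminus s)\in G$ for some $r,s\subseteq t$, and take $A'$ with $[t,A'\setminus t]$ $G^*$-independent. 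If $B,C\in[A']^\N$ are $G$-related, then $t\cup B$ and $t\cup C$ both lie in $[t,A'\setminus t]$ and witness a $G^*$-edge unless they coincide, which forces $B\triangle C\subseteq t$, hence $B\,\mathbb{E}_0\,C$. The ``subtlety'' you flag --- not accidentally killing the $\mathbb{E}_0$-edges --- is handled in the one-shot version exactly as in yours, by declaring an auxiliary edge only when the two elements of $[t,A'\setminus t]$ (equivalently, the two tails beyond $t$) are distinct; no induction over longer and longer prefixes is needed, because any two subsets of $A'$ with the same tail beyond $t$ already differ only inside $t$. So your argument is sound (the Borelness and degree bound for $H_n$, and the final case split $U\ne V$ versus $U=V$, all check out), but it does infinitely many rounds of work where one application suffices.
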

	\begin{proof}
		We define an irreflexive graph $G^*$ on $[t,A]$ by 
		\begin{align*}
		(B,C)\in G^*&\iff \big(B\ne C \land \exists r,s \subseteq t: (B \setminus r,C \setminus s)\in G\big).
		\end{align*}
		
		Clearly, $\maxdeg G^* \leq 2^{n} \maxdeg G$, so by Lemma \ref{l:suslin} there is some $A' \in[t,A]$ such that $[t,A'\setminus t]$ is $G^*$-independent. But this means that $B,C \in [A']^\N$ cannot be $G$-related, unless $t \cup B=t \cup C$, yielding $B \mathbb{E}_0 C$.
	\end{proof}
	Now let us finish the proof of Theorem \ref{t:ellentuck}, that is:
	\begin{namedthm*}{Theorem \ref{t:ellentuck}}
			Let $E$ be a CBER on $[\N]^\N$. There exists a set $A \in [\N]^\N$ such that $\restriction{E}{[A]^\N}$ is hyperfinite. 
	\end{namedthm*}
	\begin{proof}
		Suppose that $E$ is a countable Borel equivalence relation on $[\N]^\N$. As all classes of $E$ are countable, there are Borel involutions $\vfi_n$ on $[\N]^\N$ such that $E=\bigcup_{n\in\N}\graph(\vfi_n)$. It is sufficient to construct a fusion sequence $(A_n)_{n\in\N}$ such that $\rstr{\graph(\vfi_n)}{[A_n]^\N} \subseteq\mathbb{E}_0,$ as $A=\lim A_n$ will be as required. If $A_n=\{a_k:k\in\N\}$ is given with $a_k<a_{k+1}$ for every $k\in\N$, letting $t=\{a_k:k<n\}$ and applying Lemma \ref{l:indep2} to $G=\graph(\varphi_{n+1})$ and $[t,A_n\setminus t]$ yields the desired $A_{n+1}$.
	\end{proof}
	
	The original motivation behind Soare's work was recursion theoretic. His aim was to show the existence of a set $A \in [\N]^\N$ such that it does not contain a subset of higher Turing-degree. Lemma \ref{l:indep2} yields this statement as well.
	
	\begin{corollary}
		There exists a set $A\in[\N]^\N$ such that if $A \leq_T B$ and $B \in [A]^\N$ then $A \mathbb{E}_0 B$.
	\end{corollary}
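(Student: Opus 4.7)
The plan is to apply the proof of Theorem~\ref{t:ellentuck}, rather than its statement, to Turing equivalence $\equiv_T$ on $[\N]^\N$, using the observation that its proof via Lemma~\ref{l:indep2} in fact yields the stronger containment $E \cap ([A]^\N)^2 \subseteq \mathbb{E}_0$ as a set of pairs, not merely hyperfiniteness of $\rstr{E}{[A]^\N}$.

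First I would verify that $\equiv_T$ is a CBER on $[\N]^\N$: the predicate $A \le_T B$ is arithmetical (hence Borel), and each Turing degree is countable since there are only countably many Turing functionals. By Feldman--Moore, $\equiv_T = \bigcup_n \graph(\vfi_n)$ for Borel involutions $\vfi_n$, each with graph of degree $\le 1$. Running the fusion construction of Theorem~\ref{t:ellentuck} yields $A$ with $\rstr{\graph(\vfi_n)}{[A]^\N} \subseteq \mathbb{E}_0$ for every $n$, and consequently $\equiv_T \cap ([A]^\N)^2 \subseteq \mathbb{E}_0$.

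Second, since the corollary's hypothesis is $A \le_T B$ rather than the symmetric $A \equiv_T B$, I would interleave into the same fusion a Soare-style refinement ensuring: for every Turing functional $\Phi_e$ and every $B \in [A]^\N$ with $\Phi_e^B = A$, also $B \le_T A$ (hence $A \equiv_T B$). At a stage bookkept to $\Phi_e$, apply Souslin-measurable Galvin--Prikry to the analytic bad set $\{A' : \exists B \in [A']^\N\ (\Phi_e^B = A' \wedge B \not\le_T A')\}$. The ``contained'' horn of this dichotomy is ruled out by refining via Theorem~\ref{t:function} to make $\Phi_e$ continuous on the relevant neighborhood, uniformizing the witness to a Borel (hence continuous, after a further refinement) selector $\psi$ with $\Phi_e \circ \psi = \id$, and then iterating $\psi$ to produce a configuration incompatible with the finite-use structure of the continuous reduction.

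Given such $A$, the corollary follows: if $B \in [A]^\N$ and $A \le_T B$, the second step gives $A \equiv_T B$, and the strong conclusion from the first step gives $A \mathbb{E}_0 B$. The main obstacle I expect is the second step, namely verifying Ramsey-thinness of the bad sets—this is essentially Soare's classical construction repackaged in the Ellentuck topology, whereas the transition from $\equiv_T$ to $\mathbb{E}_0$, which is the novel content here, is an immediate consequence of Lemma~\ref{l:indep2}.
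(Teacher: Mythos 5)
Your first step is fine: the fusion in the proof of Theorem~\ref{t:ellentuck} does give the stronger conclusion $\equiv_T \cap\, ([A]^\N)^2 \subseteq \mathbb{E}_0$. The gap is in your second step, which you correctly identify as the main obstacle but whose mechanism does not work as described. After uniformizing and applying Theorem~\ref{t:function} you have a continuous injective selector $\psi$ on some $[t,C]$ with $\psi(A')\subseteq A'$, $\Phi_e^{\psi(A')}=A'$ and $\psi(A')\not\le_T A'$; but continuity of $\psi$ does not make $\psi(A')$ computable from $A'$ (a continuous map is only computable relative to its own code), so there is no clash with any ``finite-use structure''; moreover $\psi$ cannot in general be iterated, since $\psi(A')$ need not lie in $[t,C]$, and even a strictly ascending chain of degrees would not by itself be a contradiction. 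Ruling out the ``contained'' horn is exactly the content of Soare's theorem, so as written your plan defers the entire difficulty to an unproved sketch.

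The idea you are missing is that no upgrade from $A\le_T B$ to $A\equiv_T B$ is needed, because Lemma~\ref{l:indep2} applies to an arbitrary bounded degree Borel graph, not only to pieces of a CBER. This is how the paper argues: assuming the corollary fails everywhere, it extracts (via Galvin--Prikry for Souslin measurable colorings, Jankov--von Neumann, and Theorem~\ref{t:function}) a continuous selector $\psi$ on some $[t,A]$ with $\psi(A')\in[A']^\N$, $|A'\setminus\psi(A')|=\infty$ and $A'=\varphi_m(\psi(A'))$; injectivity of $\psi$ (it is a partial inverse of $\varphi_m$) together with Luzin--Novikov makes $\{(B,C): B=\psi(C)\text{ or }C=\psi(B)\}$ a Borel graph of degree at most $2$, and Lemma~\ref{l:indep2} then yields $A'$ with $\psi(A')\,\mathbb{E}_0\,A'$, contradicting $|A'\setminus\psi(A')|=\infty$. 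The one-sidedness of the reduction is thus absorbed into the graph, and neither your step 1 (canonizing $\equiv_T$) nor a Soare-style recursion-theoretic construction is required.
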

	
	\begin{proof}
		Towards contradiction, assume that such a set does not exist. It implies that for each $A \in [\N]^\N$ there exists some $B \in [A]^\N$ such that $|A \setminus B|=\infty$ and $B \geq_T A$. In particular, there is some $m$ with $A=\varphi_m(B)$, where $\varphi_m$ is the $m$th Turing functional. Then, there is a set of the form $[t,A]$ such that the same $m$ works for all $A' \in [t,A]$. Since $\varphi_m$ is Borel, by the Jankov-von Neumann uniformization theorem there is a Souslin measurable map $\psi: [t,A] \to [\N]^\N$ such that $A'=\varphi_m(\psi(A')),$ for all $A' \in [t,A].$ Moreover, by Theorem \ref{t:function} (by passing to a further set of the form $[t,B]$) we may assume that $\psi$ is continuous on $[t,A]$. Observe that as $\psi$ is a partial inverse, it is one-to-one. Hence, by the Luzin-Novikov theorem, the graph $G$ on $[t\cup A]^\N$ defined by \[(B,C) \in G \iff B,C \in [t,A] \cup \psi([t,A]) \land (B=\psi(C) \lor C=\psi(B))\] is Borel and has degrees bounded by $2$. By Lemma \ref{l:indep2} there is some $A' \in [t,A]$ so that $\restriction{G}{[A']^\N}$ is contained in $\mathbb{E}_0$. This contradicts that $A' \setminus \psi(A')$ is infinite. 
	\end{proof}

	\section{General topological Ramsey spaces}
	
	\label{s:main}
	
	In this section we prove the hyperfiniteness result for all topological Ramsey spaces. 
	\subsection{Sufficiently separated covers}
	\label{s:suffsep}
	
	Let us first prove the following:
	\begin{namedthm*}{Theorem \ref{t:separation2}}
		
		Let $E$ be a CBER on the space $X$, let $(G_n)_{n \in \N}$ be an increasing sequence of bounded degree Borel graphs such that $\bigcup_n G_n=E$, and let $f:\N\to\N$ be such that $\forall n\in\N\; f(n+1)\ge2\cdot(f(n)+1)$. Moreover, assume that $B_n \subseteq X$ are Borel sets so that every $B_n$ is $f(n)$-separated in $G_n$. 
		
		Then $\restriction{E}{B}$ is hyperfinite, where $B=\{x:\exists^\infty n \ (x \in B_n)\}$.
	\end{namedthm*}
	
	\begin{proof}
		
		We define a sequence $H_0 \subseteq H_1 \subseteq \dots \subseteq H_n \subseteq \dots \subseteq E$ inductively. Let $H_0=\emptyset$. 
		
		Given $H_n$, let \[H_{n+1}=H_{n} \cup \{(x,y) \in \restriction{G_{n+1}}{B}:\; x\in B_{n+1} \text{ or } y\in B_{n+1}\}.\]
		
		First we check that $\rstr{\bigcup_{n \in \N} H_n}{B}=\restriction{E}{B}$. Indeed, let $(x,y) \in \rstr{G_n}{B}$. Then there is some $k \geq n$ such that $x \in B_k$. But then $(x,y) \in H_k$. 
		
		\begin{claim}
			Every connected component of $H_n$ has diameter $\le f(n)$. 
			
		\end{claim}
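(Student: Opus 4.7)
The plan is to prove the claim by induction on $n$. The base case $H_0 = \emptyset$ is trivial. For the inductive step, we record two preliminary observations. First, a direct induction shows $H_n \subseteq G_n$ for all $n$, so distances in $H_n$ dominate those in $G_n$. Second, combining the inductive hypothesis with the $f(n+1)$-separation of $B_{n+1}$ in $G_{n+1}$, no $H_n$-component can contain two distinct points of $B_{n+1}$: such points would be at $G_n$-distance at most $f(n)$, hence at $G_{n+1}$-distance at most $f(n) < f(n+1)$, contradicting separation. Moreover, since $f(n+1) \ge 2$, no edge of $H_{n+1} \setminus H_n$ has both endpoints in $B_{n+1}$, so each such ``new'' edge has exactly one $B_{n+1}$-endpoint.

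The key step is to show that every connected component $C$ of $H_{n+1}$ contains at most one point of $B_{n+1}$. Suppose for contradiction that $b \neq b' \in B_{n+1} \cap C$ and take a simple $H_{n+1}$-path $v_0 = b, v_1, \ldots, v_m = b'$; let $v_{i_1}$ be the first vertex after $v_0$ lying in $B_{n+1}$. The interior vertices $v_1, \ldots, v_{i_1 - 1}$ of the sub-path from $v_0$ to $v_{i_1}$ avoid $B_{n+1}$, so each interior edge (if any) has no $B_{n+1}$-endpoint and therefore lies in $H_n$. Consequently the interior vertices all lie in a single $H_n$-component, whose $G_n$-diameter is at most $f(n)$ by the inductive hypothesis. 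Adding the two bounding edges then gives $d_{G_{n+1}}(b, v_{i_1}) \le f(n) + 2 \le f(n+1)$, contradicting the $f(n+1)$-separation of $B_{n+1}$ since $b$ and $v_{i_1}$ are distinct elements of $B_{n+1}$.

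With uniqueness in hand, the diameter bound on $C$ is immediate. If $C$ contains no point of $B_{n+1}$, then $C$ admits no edge of $H_{n+1} \setminus H_n$ (such an edge would have a $B_{n+1}$-endpoint), so $C$ is a single $H_n$-component and has diameter at most $f(n) \le f(n+1)$. Otherwise, let $b$ be the unique $B_{n+1}$-point of $C$: every edge of $H_{n+1} \setminus H_n$ inside $C$ has the form $(b, w)$, so every $H_n$-component of $C$ other than the one containing $b$ is attached via such a spoke from $b$. Hence every vertex of $C$ is at $H_{n+1}$-distance at most $f(n) + 1$ from $b$, and the triangle inequality yields diameter at most $2(f(n) + 1) \le f(n+1)$.

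The main conceptual difficulty lies in the key step, which must translate the structural observation — that edges of $H_{n+1} \setminus H_n$ are spokes incident to $B_{n+1}$ — into a metric bound in $G_{n+1}$ strong enough to contradict separation. The two conditions imposed on $f$ are calibrated precisely for this: the $+2$ slack absorbs the single-step interior detour through an old $H_n$-component, while the factor of $2$ accommodates the final triangle inequality around the unique hub.
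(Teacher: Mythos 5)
Your proof is correct and follows essentially the same route as the paper's: induction on $n$, showing that each $H_{n+1}$-component meets $B_{n+1}$ at most once via the $f(n)+2$ bound along a path whose interior avoids $B_{n+1}$ (so its interior edges lie in $H_n$), and then bounding the diameter by $2(f(n)+1)$ around the unique hub. The only differences are cosmetic: you pass to the first $B_{n+1}$-vertex along an arbitrary simple path instead of choosing a pair of $B_{n+1}$-points whose minimal path has no interior $B_{n+1}$-vertices, and you spell out the degenerate cases and the inclusion $H_n\sse G_n$ a bit more explicitly.
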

		\begin{proof}
			We prove this by induction on $n$. The case $n=0$ is clear, so assume that we have shown the statement for $n$. We claim that if $H$ is a connected component of $H_{n+1}$ then it contains at most one point of $B_{n+1}$. Indeed, otherwise let $x \neq x' $ from $B_{n+1}$ in $V(H)$ be such that for $p=(x_0,\dots,x_k)$, the injective path with minimal length from $x_0=x$ to $x_k=x'$ in $H$ we have $x_1,\ldots,x_{k-1}\notin B_{n+1}$. Then $x_1,\ldots,x_{k-1}$ are contained in a single connected component $H'$ of $H_n$, and by the inductive hypothesis $\dist_{H_n}(x_1,x_{k-1})\le f(n).$ But this implies $\dist_{G_{n+1}}(x,x')\le f(n)+2\le f(n+1),$ a contradiction.
			
			Thus, $H$ contains at most one point, say $x$, from $B_{n+1}$. Then \[V(H)=\bigcup \{[y]_{H_{n}}:(x,y) \in G_{n+1}\}.\]
			yielding \[\text{diam}(H) \le 2\cdot (f(n)+1) \le f(n+1).\]
		\end{proof}
		
		As $H_n\sse G_n$ which has bounded degrees, this, in turn also ends the proof of Theorem \ref{t:separation2}.
	\end{proof}
	
	\subsection{On topological Ramsey spaces}
	\label{s:topram}
	Now we show that we can construct sufficiently separated covers on positive sets on every topological Ramsey space.

	\begin{lemma}
		\label{l:graph canonization}
		Let $\fR$ be a topological Ramsey space, let $G$ be a bounded degree Borel graph on $\fR$, let $k\in \N$, and let $a\in \fA\fR$ and $A\in\fR$ be such that $a\le_{\text{fin}}r_n(A)$. Then there exists $A'\in [n,A]$ such that $[a,A']$ is $k$-separated.
	\end{lemma}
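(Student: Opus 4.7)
The plan is to reduce $k$-separation in $G$ to independence in the $k$-th power graph and then apply the Ramsey property (Theorem \ref{ramsey property}) to a finite Borel coloring of the latter. Define $G^k$ to be the graph on $\fR$ in which distinct vertices $x,y$ are adjacent if and only if $\dist_G(x,y)\le k$. By definition, a set is $k$-separated in $G$ precisely when it is $G^k$-independent, so it suffices to find $A'\in[n,A]$ such that $[a,A']$ is $G^k$-independent.

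I will first verify that $G^k$ inherits the two key features of $G$. Since $\maxdeg G$ is finite, a trivial counting argument bounds $\maxdeg G^k$ by a polynomial in $\maxdeg G$. Borelness of $G^k$ follows by writing it as the finite union, for $1\le j\le k$, of $j$-fold relational compositions of $G$: each such composition is Borel by the Luzin-Novikov theorem, using that $G$ has finite (in particular countable) vertical sections.

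Next, the Kechris-Solecki-Todorčević theorem \cite{KST} furnishes a finite Borel proper coloring $c:\fR\to m$ of $G^k$. I then apply Theorem \ref{ramsey property} with this coloring, the element $A$, the approximation $s=a$, and the integer $n$; the required compatibility $a\le_{\text{fin}} r_n(A)$ is exactly the standing hypothesis. This yields $A'\in[n,A]$ such that $\rstr{c}{[a,A']}$ is monochromatic. Any two distinct points of $[a,A']$ then receive the same $c$-color, hence are non-adjacent in $G^k$, i.e., they lie at $G$-distance strictly greater than $k$. Thus $[a,A']$ is $k$-separated in $G$.

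There is no real obstacle; the argument is essentially a translation of the Ellentuck-style proof (Lemma \ref{l:suslin}) to the abstract Ramsey setting. The only routine verifications are that $G^k$ is Borel and bounded-degree, both of which are standard consequences of the bounded-degree hypothesis on $G$. The conceptual content is simply that the abstract Galvin-Prikry type statement (Theorem \ref{ramsey property}) homogenizes $c$ on an entire neighborhood $[a,A']$ in a single step, and that a proper coloring of $G^k$ is the right gadget to convert this single application into the geometric $k$-separation statement.
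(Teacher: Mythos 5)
Your proposal is correct and follows the same route as the paper: pass to the power graph $G^k$, apply the Kechris--Solecki--Todor\v{c}evi\'c coloring theorem to get a finite Borel coloring, and then homogenize on $[a,A']$ via Theorem \ref{ramsey property}. The paper leaves the Borelness and bounded-degree of $G^k$ implicit, while you spell them out; otherwise the arguments coincide.
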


	\begin{proof}
		By \cite{KST}, every irreflexive bounded degree Borel graph admits a finite Borel vertex coloring. Let $c:\fR \to m$ be such a coloring of \[G^k=\{(x,y):x\neq y \wedge \dist(x,y)\le k \}.\] 
        By Theorem \ref{ramsey property}. there is some $A'\in[n,A]$ such that $\rstr{c}{[a,A']}$ is constant, thus, $[a,A']$ is $k$-separated. 
	\end{proof}

	\begin{lemma}\label{ramsey separation}
		Let $\fR$ be a topological Ramsey-space, $G_n$ a sequence of bounded degree Borel graphs on $\fR$, and $f:\N\to\N$. Then there exist $A\in\fR$ and $B_n\sse\fR$ Borel sets such that every $B_n$ is $f(n)$-separated in $G_n$, and 
		$$[A]\sse\{C:\exists^\infty n \ (C \in B_n)\}.$$
	\end{lemma}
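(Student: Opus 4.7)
The plan is to construct, by recursion on $n$, a fusion sequence $(A_n)_{n\in\N}$ together with a sequence of approximations $s_n\le_{\text{fin}} r_n(A_n)$, driven by a bookkeeping scheme that guarantees that each approximation $s$ which ever satisfies $s\le_{\text{fin}} r_j(A_j)$ for some $j$ is chosen as $s_n$ for infinitely many $n$. Setting $A:=\lim_n A_n$ (which exists because $\fR$ is closed in $\fA\fR^\N$) and $B_n:=[s_n,A_{n+1}]$ will yield the desired witnesses.

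For the recursive step, suppose $A_0,\dots,A_n$ and $s_0,\dots,s_{n-1}$ are already in hand. I would fix once and for all a surjection $\sigma\colon\N\to\N$ with every fibre infinite, and enumerate, as the construction unfolds, the approximations $s$ that become eligible (those with $s\le_{\text{fin}} r_j(A_j)$ for some $j$ already reached) as $s^{(0)}, s^{(1)},\dots$. At stage $n$ I would set $s_n:=s^{(\sigma(n))}$ if this approximation has already been introduced, and otherwise pick any default $s_n\le_{\text{fin}} r_n(A_n)$. Applying Lemma \ref{l:graph canonization} with $G=G_n$, $a=s_n$, $A=A_n$, $k=f(n)$ yields some $A_{n+1}\in[n,A_n]$ for which $B_n:=[s_n,A_{n+1}]$ is $f(n)$-separated in $G_n$ and is clearly Borel.

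To verify the coverage $[A]\sse\{C:\exists^\infty n\ (C\in B_n)\}$, I would take an arbitrary $C\in[A]$ and invoke Proposition \ref{fusion}(2) to produce a pair $(n_0,s^*)$ with $C\in[s^*,A_{n_0}]$ and $s^*\le_{\text{fin}} r_{n_0}(A_{n_0})$; in particular $s^*$ is an approximation of $C$ and enters the enumeration as $s^{(k)}$ for some fixed $k$. Since $\sigma^{-1}(k)$ is infinite, the bookkeeping forces $s_m=s^*$ for infinitely many $m\ge n_0$, and for each such $m$, Proposition \ref{fusion}(1) yields $C\le A\le A_{m+1}$, so $C\in[s^*,A_{m+1}]=B_m$. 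The main subtlety, and the step I would treat most carefully, is ensuring that whenever the bookkeeping forces $s_m=s^*$ at a stage $m>n_0$, the hypothesis $s_m\le_{\text{fin}} r_m(A_m)$ of Lemma \ref{l:graph canonization} is still satisfied. This reduces to the persistence of eligibility of approximations along the fusion sequence, a standard feature of topological Ramsey spaces, transparent in the Ellentuck case where it reduces to the inclusion $r_{n_0}(A_{n_0})\sse r_m(A_m)$.
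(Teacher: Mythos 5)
Your overall architecture (fusion sequence, Lemma \ref{l:graph canonization} at each step, $B_n=[s_n,A_{n+1}]$, Proposition \ref{fusion} for coverage) is the right one, but your bookkeeping differs from the paper's in a way that makes the step you yourself flag as ``the main subtlety'' into a genuine gap. Your diagonalization handles \emph{one} approximation $s_n$ per stage and relies on returning to a fixed $s^*$ at infinitely many later stages $m$. For this you need $s^*\le_{\text{fin}} r_m(A_m)$ at those later stages: without it you cannot apply Lemma \ref{l:graph canonization} (or Theorem \ref{ramsey property}) so as to obtain $A_{m+1}\in[m,A_m]$ --- applying it at the original depth $n_0$ only yields an element of $[n_0,A_m]$, which does not continue the fusion sequence --- and if you instead fall back to a default $s_m$ whenever $s^*$ has ceased to be eligible, the coverage argument collapses, since Proposition \ref{fusion}(2) only guarantees that \emph{some} (varying) $s\le_{\text{fin}} r_n(A_n)$ works at each of infinitely many $n$, not that a single $s^*$ recurs. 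The persistence property you invoke, namely $a\le_{\text{fin}} r_n(A)\Rightarrow a\le_{\text{fin}} r_m(A)$ for $m\ge n$, does hold in the Ellentuck space and in all the standard examples, but it is \emph{not} among the axioms of an abstract topological Ramsey space (the finitization axiom only asserts existence of \emph{some} $m$ with $r_n(A)\le_{\text{fin}} r_m(B)$), so in the generality of the lemma this step is unjustified.

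The paper's proof is engineered precisely to avoid this issue: at stage $n$ it processes the \emph{entire} finite set $\{s:s\le_{\text{fin}} r_n(A_n)\}$ in one block, shrinking $A_n$ finitely many times (always within $[n,\cdot\,]$, using the same index $n$, so only the current eligibility $s\le_{\text{fin}} r_n(A_n)=r_n(A_j')$ is ever needed) and assigning each such $s$ its own set $B_k$ with $k_n<k\le k_{n+1}$. Then for $C\in[A]$, whichever witness $s$ Proposition \ref{fusion}(2) produces at stage $n$ is automatically one of the $s_j$ handled in block $n$, so $C$ lands in some $B_k$ from that block, for infinitely many $n$. To repair your proof you should either adopt this block-processing scheme, or explicitly add (and justify, or assume) the persistence of $\le_{\text{fin}}$ along the sequence of approximations; as written, your argument proves the lemma only for spaces satisfying that extra property.
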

	
	\begin{proof}
		We construct a fusion sequence $(A_n)_{n\in\N}$ inductively. Let 
		\[k_n=\sum_{i<n}|\{s\in\fA\fR: \; s\le_{\text{fin}}r_i(A_i)\} |.\]
		Let $B_0=\est.$ Throughout the construction, we define $B_{k_n+1}, \ldots, B_{k_{n+1}}$ after fixing $A_n$ (and  $k_{n+1}$).
		Suppose that $A_n$ and $B_0,\ldots, B_{k_n}$ are given. We enumerate $\{s\in\fA\fR: \; s\le_{\text{fin}}r_n(A_n)\}$ as $\{s_j: 1\le j\le k_{n+1}-k_n\}$, and define $(A_j')_{j\le k_{n+1}-k_n}$ recursively as follows: let $A_0'=A_n$, and if $A_j'$ is already given for some $j<k_{n+1}-k_n$, choose $A_{j+1}'\in [n,A_j']$ so that $[s_{j+1},A_{j+1}']$ is $f(k_n+j+1)$-separated in $G_{k_n+j+1}$ (such an element of $\fR$ exists by Lemma \ref{l:graph canonization}), and let $B_{k_n+j+1}=[s_{j+1},A_{j+1}']$. Afterwards, let $A_{n+1}=A_{k_{n+1}-k_n}'\in [n,A_n].$
		
		Finally, let $A=\lim A_n$ and suppose that $C'\in [A].$ By Proposition \ref{fusion}. there are infinitely many $n$ with $C'\in[s,A_n]$ for some $s\le_{fin}r_n(A_n)$. For such an $n$ and $s$, we have that $[s,A_n]\sse B_k$ for some $k_{n-1}<k\le k_n,$ which yields $C' \in\{C:\exists^\infty n \ (C \in B_n)\}$.
	\end{proof}

		Now we are ready to show our main result. 
	\begin{namedthm*}{Theorem \ref{t:ramsey hyperfin}}
		Let $\mathcal{R}$ be a topological Ramsey space and $E$ be a CBER on $\mathcal{R}$. Then there is a set $A \in \mc{R}$ such that $\restriction{E}{[A]}$ is hyperfinite. 
	\end{namedthm*}
	
	\begin{proof}
		As all classes of $E$ are countable, there are Borel involutions $\vfi_n$ on $\fR$ such that $E=\bigcup_{n\in\N}\graph(\vfi_n)$. Let $G_n=\bigcup_{i\in n}\graph(\vfi_i)$, and $f:\N\to \N$ be as in Theorem \ref{t:separation2}. By Lemma \ref{ramsey separation}. there exist $A\in\fR$ and $B_n\sse\fR$ Borel sets such that every $B_n$ is $f(n)$-separated in $G_n$, and 
		$$[A]\sse\{C:\exists^\infty n \ (C \in B_n)\},$$
		thus, $\rstr{E}{[A]}$ is hyperfinite according to Theorem \ref{t:separation2}.
	\end{proof}

	\begin{remark}
	    It is straightforward to modify our proof to the more general setting of Ramsey spaces described in Subsection \ref{general ramsey} to yield the following: for any Ramsey space $(\mathcal{R}, \mathcal{S}, \le,\le^\circ, r, s)$ and $E$ CBER on $\mathcal{R}$ there exists $X\in\mathcal{S}$ such that $\rstr{E}{[\est,X]}$ is hyperfinite.
	\end{remark}
	
	\subsection{Generic hyperfiniteness}
	\label{s:generic}
	
	As mentioned above, the method of sufficiently separated covers also gives a new proof of the classical results of Hjorth-Kechris \cite{hjorth-kechris1996} and Sullivan-Weiss-Wright \cite{sullivan1986generic}.
	
	\begin{theorem}
		Let $E$ be a CBER on the space $X$. Then there is a comeager invariant Borel set $C$ such that $\rstr{E}{C}$ is hyperfinite.
	\end{theorem}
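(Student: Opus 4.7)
My plan is to follow the template of the proof of Theorem \ref{t:ramsey hyperfin}, replacing the Ramsey-theoretic selection principle by the Baire property. I would write $E=\bigcup_{n\in\N}\graph(\vfi_n)$ for Borel involutions $\vfi_n$, set $G_n=\bigcup_{i<n}\graph(\vfi_i)$, and fix $f\colon\N\to\N$ growing fast enough to satisfy the hypothesis of Theorem \ref{t:separation2}. For each $n$ the graph $G_n^{f(n)}$ has finite maximum degree, so by the theorem of \Kechris-\Solecki-\Todorcevic \cite{KST} I obtain a Borel partition $X=X_n^1\sqcup\dots\sqcup X_n^{c_n}$ into finitely many $f(n)$-separated Borel sets.

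The main step is to choose Borel $f(n)$-separated sets $B_n\subseteq X$ such that $C=\{x:\exists^\infty n\ x\in B_n\}$ is comeager. To do so, I would fix a countable basis $\{U_k\}_{k\in\N}$ of nonempty open sets in $X$ and an enumeration $n\mapsto k_n$ of $\N$ taking every value infinitely often. Since the classes $X_n^i$ partition $U_{k_n}$, one of them, say $X_n^{i_n}$, is non-meager in $U_{k_n}$; by the Baire property I can pick a nonempty open $V_n\subseteq U_{k_n}$ in which $X_n^{i_n}$ is comeager, and set $B_n=X_n^{i_n}\cap V_n$, which is still $f(n)$-separated in $G_n$.

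To verify comeagerness of $C$, I first observe that $\bigcup_{n\ge m}V_n$ is dense (and, being open, comeager) in $X$ for every $m$: given a basis element $U_k$, the enumeration yields some $n\ge m$ with $k_n=k$, whence $V_n\subseteq U_k$. Second, since $B_n$ is comeager in $V_n$, each $V_n\setminus B_n$ is meager, so $\bigcup_{n\ge m}B_n$ differs from the comeager set $\bigcup_{n\ge m}V_n$ by a countable union of meager sets and is itself comeager. This yields that $C=\bigcap_m\bigcup_{n\ge m}B_n$ is comeager, and Theorem \ref{t:separation2} then delivers hyperfiniteness of $\rstr{E}{C}$.

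Finally, to arrive at an $E$-invariant comeager Borel set I pass to the saturation $[C]_E$, which is Borel by Luzin-Novikov, contains $C$ and so is comeager, and is $E$-invariant; hyperfiniteness of $\rstr{E}{[C]_E}$ follows from that of $\rstr{E}{C}$ via the standard fact that hyperfiniteness transfers from a Borel complete section to the whole equivalence relation. The hardest point of the argument is reconciling the sparsity of each $B_n$ (it must be $f(n)$-separated with $f$ growing at least geometrically) with the requirement that the $B_n$'s collectively cover a comeager set infinitely often; the enumeration $n\mapsto k_n$ together with the Baire property is the key ingredient that overcomes this tension.
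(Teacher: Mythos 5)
Your proof is correct and follows essentially the same route as the paper: a Kechris--Solecki--Todor\v{c}evi\'c coloring of $G_n^{f(n)}$ supplies finitely many $f(n)$-separated Borel pieces, Baire category lets you select, for each $n$, a piece that is non-meager (indeed comeager on an open subset) of a basic open set scheduled infinitely often, and Theorem~\ref{t:separation2} then yields hyperfiniteness on the resulting comeager set. The only divergence is in the final invariance step --- the paper shrinks the comeager set by removing the (meager) saturation of its complement, whereas you saturate upward and invoke the standard fact that hyperfiniteness passes from a complete Borel section to the whole relation --- and both versions are fine.
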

	
	\begin{proof}
		As all classes of $E$ are countable, there are Borel involutions $\vfi_n$ on $X$ such that $E=\bigcup_{n\in\N}\graph(\vfi_n)$. Let $G_n=\bigcup_{i\in n}\graph(\vfi_i)$, and $f:\N\to \N$ be as in Theorem \ref{t:separation2}. We will construct Borel sets $B_n \subseteq X$ so that every $B_n$ is $f(n)$-separated in $G_n$, and there is a comeager invariant Borel set $C$ such that $C\sse\{x:\exists^\infty n \ (x \in B_n)\}$. 
		
		Fix a bijection $g:\N\to\N^2$ with coordinate functions $g_0,g_1$ and enumerate a basis of $X$ as $(U_n)_{n\in\N}.$ Since \[G_n^{f(n)}=\{(x,y):x\neq y \wedge \dist_{G_n}(x,y)\le f(n) \}\]  has bounded degrees there exists a finite Borel vertex coloring $c_n:X\to m$ of $G_n^{f(n)}$ by \cite{KST}. For some $i\in m$ the preimage $c_n^{-1}(i)$ is non-meager in $U_{g_0(n)}.$ Set $B_n=c_n^{-1}(i).$ This way $B_n$ is $f(n)$-separated in $G_n$ and for every $k\in\N$ we have that
		\[C_k=\bigcup_{n\in g_1^{-1}(k)}B_n\]
		is comeager in $X,$ and 
		\[C'=\bigcap_{k\in\N}C_k\sse \{x:\exists^\infty n \ (x \in B_n)\}.\]
		As the saturation of $X\setminus C'$ is meager, $C =C'\setminus\big[X\setminus C'\big]_E$ is a comeager invariant Borel set such that $\rstr{E}{C}$ is hyperfinite by Theorem \ref{t:separation2}.
	\end{proof}
    
	\section{Open problems}
	\label{s:problems}
	Contrary to the second author's (unpublished) claims made in early 2020, the following question is still open:
	\begin{problem}
		Let $E$ be a CBER on $[\N]^\N$. Is there a Ramsey co-null Borel set $B$ such that $\restriction{E}{B}$ is hyperfinite?
  	\end{problem}
  	It would be already very interesting to exclude constructions used above.
  	  	
	\begin{problem}
		Let $E$ be a CBER on $[\N]^\N$ and $E=\bigcup_n G_n$ where $G_n$ are all bounded degree Borel graphs. Is there necessarily a sequence of Borel sets $B_n$ such that $B_n$ is $f(n)$-separated in $G_n$, where $f$ is a function obeying Theorem \ref{t:separation2} and $\{x:\exists^\infty n \ x \in B_n\}$ is Ramsey co-null?
	\end{problem}

	As mentioned in the introduction, on the Carlson-Simpson space every CBER is actually smooth on some positive set \cite{panagiotopoulos2022every}, while on the Ellentuck space $\mathbb{E}_0$ shows that we cannot expect smoothness. This suggests the following (see also \cite{panagiotopoulos2022every}).

    \begin{problem}
        Characterize those topological Ramsey spaces on which any CBER is smooth on some positive set.
    \end{problem}

	\bibliographystyle{abbrv}
	\bibliography{ref.bib}
	
\end{document}